\newcommand\reallywidehat[1]{%
\savestack{\tmpbox}{\stretchto{%
  \scaleto{%
    \scalerel*[\widthof{\ensuremath{#1}}]{\kern-.6pt\bigwedge\kern-.6pt}%
    {\rule[-\textheight/2]{1ex}{\textheight}}
  }{\textheight}%
}{0.5ex}}%
\stackon[1pt]{#1}{\tmpbox}%
}
\newtheorem{theorem}{Theorem}[section]
\newtheorem{corollary}[theorem]{Corollary}
\newtheorem{proposition}[theorem]{Proposition}
\newtheorem{lemma}[theorem]{Lemma}
\newenvironment{proof}[1][Proof]{\begin{trivlist}
\item[\hskip \labelsep {\bfseries #1}]}{\end{trivlist}}
\newcommand\restr[2]{{
  \left.\kern-\nulldelimiterspace 
  #1 
  \littletaller 
  \right|_{#2} 
  }}
\newcommand{\littletaller}{\mathchoice{\vphantom{\big|}}{}{}{}}
\newenvironment{remark}[1][Remark]{\begin{trivlist}
\item[\hskip \labelsep {\bfseries #1}]}{\end{trivlist}}
\newcommand{\qed}{\nobreak \ifvmode \relax \else
	\ifdim\lastskip<1.5em \hskip- \lastskip
	\hskip 0.5em plus0em minus0.5em \fi \nobreak
	\vrule height0.75em width0.5em depth0.25em\fi}
\begin{document}

\title{Instability of marginally outer trapped surfaces from initial data set symmetry}

\author{Abbas M \surname{Sherif}}
\email{abbasmsherif25@gmail.com}
\affiliation{Institute of Mathematics, Henan Academy of Sciences (HNAS), 228 Mingli Road, Zhengzhou 450046, Henan, China}

\begin{abstract}
Let $(\tilde{\Sigma},h_{ab},K_{ab})$ be an initial data set and let $x^a$ be a symmetry vector of $\tilde{\Sigma}$. Consider a MOTS $\mathcal{S}$ in $\tilde{\Sigma}$ and let the symmetry vector be decomposable along the unit normal to $\mathcal{S}$ in $\tilde{\Sigma}$, and along $\mathcal{S}$. In this note we present some basic results with regards to the stability of $\mathcal{S}$. The vector decomposition allows us to characterize the instability of $\mathcal{S}$ by the nature of the zero set of the normal component to $\mathcal{S}$ and the divergence of the component along $\mathcal{S}$. Further observations are made under the assumption of $\mathcal{S}$ having a constant mean curvature, and $\tilde{\Sigma}$ being an Einstein manifold. 
\end{abstract}

\maketitle


\section{Introduction}


Marginally outer trapped surfaces (MOTS), those for which light rays perpendicular to the surface neither converge nor diverge play a pivotal role in the formation of black holes and gravitational collapse. These objects are closely linked to trapped surfaces which were crucial in proof of the singularity theorems by Roger Penrose \cite{rp1} and later in subsequent works (see the references \cite{sen1,sen2,sen3} for detailed critique of the singularity theorems, as well as references therein). Properties of MOTS have been used extensively to prove uniqueness and topological censorship results \cite{fried1,gal1,gal2,piot1,ted1}. In particular, a notion of stability of MOTS, introduced by Andersson \textit{et al.} \cite{and1}, which characterizes when a MOTS constitutes the boundary between the trapped and untrapped regions of the spacetime (the MOTS propagates along a hypersurface -- known as a marginally outer trapped tube, abbreviated MOTT -- that bounds a black hole), has proven particularly useful in most of these works. (Several notions of MOTS stability preceded the work of Andersson \textit{et al.} (see \cite{Booth2,cao1,hay1,new1}), it is however \cite{and1} that is most widely used in more recent literature.) This notion of stability is formalized as an eigenvalue problem of an elliptic operator -- notated in the text $L_{\mathcal{S}}$ (to later be defined), where $\mathcal{S}$ denotes the MOTS -- with a generally complex spectrum of eigenvalues. The principal eigenvalue $\lambda_0$, that with the smallest real part and which is always real, characterizes stability: the MOTS is strictly stable, marginally stable or unstable if $\lambda_0>0, \lambda_0=0$ or $\lambda_0<0$.

While strictly stable MOTS are crucial to the singularity theorems since this indicates the presence of trapped surfaces, interest in the unstable ones have recently acquired momentum as they appear to be a prominent feature of the interior of black holes. The existence of an infinite number of unstable MOTS in the Schwarzschild black hole interior was established in \cite{Booth3}, obtained by a `shooting method' which shoots the MOTS equivalent of a geodesic, aptly named ``MOTSodesics'', from some specified axis. This was followed by subsequent works which identified unstable MOTS in several classes of spacetimes \cite{Booth4,Booth5}, and more recently \cite{Boothx}. The presence and role of unstable MOTS in binary black hole coalescence was further established in the remarkable works by Pook-Kolb \textit{et al.} \cite{Booth6,Booth7} where these MOTS were shown to play a role in the annihilation of the initial MOTS of two colliding black holes in the interior of the final black hole. 

Subsequently, formalizing when a MOTS can be unstable became a necessary interest. When a spacetime admits a symmetry, there are implications for the stability of MOTS. In the case of a spacetime admitting  a timelike conformal Killing vector  with conformal factor whose gradient is timelike, it was recently shown in \cite{mars3} that any embedded MOTS in such a spacetime is unstable, which was then applied to the de Sitter spacetime. (The relationship between symmetries and MOTS existence were previously considered in an early work by Carrasco and Mars \cite{mars4}.) Naturally one would like to consider the relationship between symmetry and stability purely from a local perspective, i.e. from the initial data point of view.

Consider an initial data (ID) set $(\tilde{\Sigma},h_{ab},K_{ab})$ and an embedded (or more mildly, immersed) MOTS $\mathcal{S}\subset\tilde{\Sigma}$, and suppose a vector field $x^a$ is a symmetry (to later be defined) of $\tilde{\Sigma}$. For certain characters of the vector field $x^a$ along $\mathcal{S}$, the authors in \cite{Booth1} established several instability results for the MOTS.  In particular the authors were interested in those cases where the symmetry of the slice $\tilde{\Sigma}$ is not shared by the MOTS but rather the symmetry vector is either tangent to the MOTS at some point or nowhere tangent to the MOTS. 

For a decomposable symmetry vector along the unit normal direction to $\mathcal{S}$ in $\tilde{\Sigma}$ and along $\mathcal{S}$ itself, we can project the symmetry criterion to the MOTS and conditions on the vector components will play a role in characterizing instability as one would expect. Such decomposition provides a neat covariant way to analyze stability through algebraic relations on functions defined on the MOTS. The role of the extrinsic geometry of $\tilde{\Sigma}$ is also quite key. This is related to a differential relation on the mean curvature of the MOTS. Our aim in this note is to establish some instability results, relying on few of the results of \cite{Booth1}, in terms of the character of the components of the vectors as well as how instability relates to the extrinsic curvature of the slice. We will also briefly consider how these results are modified/relaxed when some geometric restrictions are imposed on the MOTS and the embedding slice.

We organize this paper as follows. Section \ref{sec2} briefly introduces the notion of MOTS and their stability, recalling recent results that are of relevance to the current work. In Section \ref{sec3}, we introduce the symmetry vector type that is of interest here and present the results of this work. We then state and prove several basic instability results for embedded MOTS under the decomposition assumption on the symmetry vector of the embedding slice. For certain geometric restrictions on the slice and the MOTS, some comments on the nature of the instability of the MOTS are provided. We conclude with discussion of results in Section \ref{sec5}.


\section{MOTS, stability and initial data symmetry}\label{sec2}


Here we briefly review the geometry of marginally outer trapped surfaces (MOTS) in spacetimes. (One may consult the references \cite{sh1,ash1,ib11,ib12} for more details on the geometry of these objects, with more relatively recent covariant formulations adapted in \cite{el1,as1,as2,as3}.) We then introduce results on the relationship between symmetry of an initial data set containing a MOTS and its instability.

Consider a 2-surface $\mathcal{S}$ embedded in a 4-dimensional spacetime $(\mathcal{M},g_{ab})$ obeying the Einstein field equations $G_{ab}=T_{ab}$ where $G_{ab}$ is the Einstein tensor and $T_{ab}$ is the Energy momentum tensor. Throughout $\mathcal{S}$ will be assumed to be orientable. We denote the induced metric on $\mathcal{S}$ and its compatible covariant derivative operator respectively by $q_{ab}$ and $\mathcal{D}_a$. 

The normal space of $\mathcal{S}$ is spanned by a pair of null normal vector fields, which we denote by $k^a$ and $\ell^a$, and are normalized as $k^a\ell_a=-2$.

We assume a decomposition of $\mathcal{M}$ into spacelike slices $\{\Sigma_t\}$, where $t$ parametrizes the foliation along a normalized timelike direction that we denote by $u^a$. Fix a slice $\tilde{\Sigma}$ in $\{\Sigma_t\}$ and let $\mathcal{S}\subset\tilde{\Sigma}$ be a closed embedded spacelike surface, and let $n^a$ be the outward pointing normal to $\mathcal{S}$ in $\tilde{\Sigma}$. We construct the null pair as

\begin{eqnarray}
k^a=u^a+n^a,\quad \ell^a=u^a-n^a,\label{nullv}
\end{eqnarray}
chosen such that $k^a$ points outward and $l^a$ points inward to $\mathcal{S}$. The null expansions are then expressed respectively as

\begin{eqnarray}
\theta_k=\mathcal{Z}_1+\mathcal{Z}_2,\quad\theta_l=\mathcal{Z}_1-\mathcal{Z}_2,\label{mot1}
\end{eqnarray}
where we have written the divergences $\mathcal{Z}_1=\mathcal{D}_au^a$ and $\mathcal{Z}_2=\mathcal{D}_an^a$. Then, the surface $\mathcal{S}$ is a marginally outer trapped surface (MOTS) if $\theta_k$ vanishes on $\mathcal{S}$ and will be said to be a minimal MOTS if $\theta_k$ and $\theta_l$ both vanish on $\mathcal{S}$. Note that the minimal condition is equivalent to the vanishing of the mean curvature $\mathcal{Z}_2$ of $\mathcal{S}$. If there are only isolated points where both $\theta_k$ and $\theta_l$ vanish, we will say that those points are minimal points of $\mathcal{S}$.

MOTS may or may not evolve to foliate a horizon due to various obstructions. When they do evolve to a horizon, under certain physical assumptions on the spacetime such horizon will enclose trapped surfaces, i.e. the horizon bounds a black hole. The nature of the evolution itself determines whether a black hole will expand or stay in equilibrium, in which case these horizons, for a smooth evolution, carry the particular names dynamical horizons and isolated horizons (see for example the references \cite{sh1,ib11,ash1,ash2} where these objects have been extensively analyzed, including their uniqueness, and for those cases where the spacetime admits a local rotational symmetry see \cite{as1,as2,as3}).

The notion of MOTS stability introduced by Andersson \textit{et al.}, \cite{and1}, also characterizes the evolution of MOTS under certain curvature assumptions on the spacetime. It was established that a strictly stable MOTS will necessarily evolve to a smooth horizon bounding a black hole. And under the strict null curvature condition, the horizon is a dynamical horizon. The stability operator has been used in several works, analytically, to demonstrate various aspects of the stability/instability of MOTS in some simple exact solutions (see for example \cite{pm1,as4} as well as the examples addressed in \cite{Booth1}).

The stability operator is obtained as the outward deformation

\begin{eqnarray}
\delta_{\psi n}\theta_k=L_{\mathcal{S}}\psi,
\end{eqnarray}
which allows the rephrasing of the stability of the MOTS as an eigenvalue problem as previously discussed in the intrroduction. Explicitly, the operator has the form

\begin{eqnarray}
L_{\mathcal{S}}=-\arrowvert \mathcal{D}-\omega\arrowvert^2+\frac{1}{2}\left(\mathcal{R}-\arrowvert \sigma_k\arrowvert^2-2G_{ab}k^au^b\right).
\end{eqnarray}
Here $\mathcal{R}$ is the scalar curvature of the MOTS, $\sigma_k$ is the shear of $k^a$ along $\mathcal{S}$ and the 1-form $2\omega_a=-\ell_bq^c_a\nabla_ck^b$ is the connection on $T^{\perp}(\mathcal{S})$, the normal bundle of $\mathcal{S}$. This 1-form is identified with angular momentum and may be seen as a perturbation away from self-adjointness of the stability operator: when $\omega_a$ vanishes identically the operator is self-adjoint and when $\omega_a$ is a gradient of some function on $\mathcal{S}$, the operator is equivalent to a self-adjoint one, a property of the scaling freedom of the null vectors. This fact is shown in \cite{Booth1} (see also the work by Koh \textit{et al.} \cite{as5} where various aspects pertaining to the self-adjointness of the operator was discussed in a covariant setting for spacetimes adnitting a particular decomposition.)

To reiterate, the characterization of stability of $\mathcal{S}$ follows from the sign of the principal eigenvalue $\lambda_0$ where strict stability, marginal stability, or instability is given by $\lambda_0$ being strictly positive, zero, or negative.

When a MOTS is unstable, it is now understood that it cannot evolve to a black hole horizon. Unstable MOTS can be found in the Robertson walker solutions, de Sitter spacetimes and even simple static solutions as have been demonstrated by works referenced earlier. These MOTS are usually identified with nontrivial topologies, i.e. aspherical topologies including very complicated intersections. These unstable scenarios are the concentration of the present work. Our consideration will be entirely localized in the sense that, there will be no mention of a parent spacetime, but rather exclusive reference to the slice and the embedded surface. We introduce some recent results of \cite{Booth1} on the instability of a MOTS in a given initial data (ID) set with a particular symmetry. 

For a given ID set $(\tilde{\Sigma},h_{ab},K_{ab})$, we say that a non-trivial vector field $x^a$ is a symmetry of $\tilde{\Sigma}$ if the following hold:

\begin{eqnarray}
\mathcal{L}_xh_{ab}=0\quad\mbox{and}\quad\mathcal{L}_xK_{ab}=0,\label{stab1}
\end{eqnarray}
with $\mathcal{L}_x$ denoting the Lie derivative operator along the vector field $x^a$. For a surface $\mathcal{S}$ in $\tilde{\Sigma}$, if further $x^a$ is everywhere tangent to $\mathcal{S}$, $x^a$ is said to also be a symmetry of $\mathcal{S}$. This of course implies that the induced metric on $\mathcal{S}$ is Lie dragged along $x^a$. The following result of \cite{Booth1} is then of interest to this note, which we state without proof. 

\begin{theorem}[Booth \textit{et al.} \cite{Booth1}, Th. 1.6]\label{boot1}
Suppose $\mathcal{S}$ is a MOTS and $x^a$ is a symmetry of an ID set $(\tilde{\Sigma},h_{ab},K_{ab})$ but not of $\mathcal{S}$. Then $0$ is an eigenvalue of $L_{\mathcal{S}}$, with eigenfunction $x^an_a$. Moreover,

\begin{itemize}
\item[(1)] $\mathcal{S}$ is marginally stable iff $x^a$ is nowhere tangent to $\mathcal{S}$,

\item[(2)] $\mathcal{S}$ is unstable iff $x^a$ is tangent to $\mathcal{S}$ at some point.
\end{itemize}
\end{theorem}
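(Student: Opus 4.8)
The plan is to promote the ID-symmetry hypothesis to the statement that $\psi:=x^an_a$ lies in the kernel of $L_{\mathcal S}$, and then to read off the sign of the principal eigenvalue $\lambda_0$ from the zero set of $\psi$. For the kernel statement, note that $\theta_k=\mathcal Z_1+\mathcal Z_2$, where $\mathcal Z_1$ is the partial trace of $K_{ab}$ over $T\mathcal S$ and $\mathcal Z_2$ is the mean curvature of $\mathcal S$ in $(\tilde\Sigma,h_{ab})$, so the outer expansion of any surface $\mathcal S'\subset\tilde\Sigma$ is a functional of $h_{ab}$, $K_{ab}$ and the embedding alone. A symmetry of the ID set is, by definition, a flow $\phi_t$ preserving both $h_{ab}$ and $K_{ab}$; it therefore carries $\mathcal S$ to surfaces $\mathcal S_t:=\phi_t(\mathcal S)$ with $\theta_k[\mathcal S_t]\circ\phi_t=\theta_k[\mathcal S]$, which vanishes identically because $\mathcal S$ is a MOTS. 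Differentiating at $t=0$ and decomposing $x^a=\psi n^a+X^a$ with $X^a$ tangent to $\mathcal S$, the tangential piece of the deformation contributes the Lie-derivative term $X^a\mathcal D_a\theta_k$, which is zero since $\theta_k\equiv0$ on $\mathcal S$; so only the normal deformation survives, giving $0=\delta_{\psi n}\theta_k=L_{\mathcal S}\psi$. Finally $\psi\not\equiv0$, for otherwise $x^a$ would be everywhere tangent to $\mathcal S$, i.e.\ a symmetry of $\mathcal S$, against the hypothesis. Hence $0$ is an eigenvalue of $L_{\mathcal S}$ with nontrivial eigenfunction $\psi=x^an_a$.

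For the rest I would invoke the Krein--Rutman/Perron--Frobenius theory for the (not necessarily self-adjoint) second-order elliptic operator $L_{\mathcal S}$ on the closed connected surface $\mathcal S$: the principal eigenvalue $\lambda_0$ is real and simple, its eigenspace is spanned by a strictly positive function $\phi_0$, and $\lambda_0$ is the \emph{only} eigenvalue admitting an eigenfunction of a single sign; moreover $\lambda_0\le0$, since $0$ is already in the spectrum and $\lambda_0$ has the smallest real part there. Two observations then close the argument. (i) If $x^a$ is nowhere tangent to $\mathcal S$, then $\psi$ is nowhere zero and hence, by continuity and connectedness, of one sign; by the uniqueness clause, its eigenvalue $0$ must equal $\lambda_0$, so $\mathcal S$ is marginally stable. (ii) If $x^a$ is tangent to $\mathcal S$ at some point, then $\psi$ vanishes somewhere while $\psi\not\equiv0$; but if $0$ were $\lambda_0$, simplicity would force $\psi$ to be a nonzero multiple of the strictly positive $\phi_0$, hence nowhere zero --- a contradiction. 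Therefore $0\neq\lambda_0$, and with $\lambda_0\le0$ this yields $\lambda_0<0$, i.e.\ $\mathcal S$ is unstable.

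Because ``nowhere tangent'' and ``tangent at some point'' are exhaustive and mutually exclusive, and because $0$ lying in the spectrum already precludes $\lambda_0>0$ (strict stability), observations (i) and (ii) are precisely the two stated equivalences. I expect the genuinely delicate step to be the first one --- justifying cleanly that an ID symmetry transports $\theta_k$ as a scalar, so that its first variation along the full vector $x^a$ vanishes, and that the tangential piece of that variation drops out by the MOTS condition; the spectral dichotomy in the second step is routine once one knows that $\lambda_0$ is real, simple, and carries a positive eigenfunction.
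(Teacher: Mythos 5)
Your argument is correct, but note that this paper does not prove Theorem \ref{boot1} at all: it is imported verbatim from Booth \textit{et al.} \cite{Booth1} and explicitly ``stated without proof,'' so the only meaningful comparison is with that reference --- and your proof is essentially the one given there. Your kernel step (the flow of an ID symmetry preserves $h_{ab}$ and $K_{ab}$, hence transports $\theta_k$; decomposing $x^a=\psi n^a+X^a$ on $\mathcal{S}$, the tangential contribution $X^a\mathcal{D}_a\theta_k$ vanishes because $\theta_k\equiv 0$, leaving $L_{\mathcal{S}}\psi=0$; and $\psi=x^an_a\not\equiv0$ precisely because, by the definition used here, ``not a symmetry of $\mathcal{S}$'' means not everywhere tangent) is exactly their lemma-level input. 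The spectral half is likewise the package of lemmas they establish for the non-self-adjoint $L_{\mathcal{S}}$: $\lambda_0$ is real and simple with a strictly positive eigenfunction (and the adjoint $L_{\mathcal{S}}^{*}$ has one too), so that pairing a sign-definite kernel element $\psi$ with the adjoint eigenfunction, $0=\int_{\mathcal{S}}(L_{\mathcal{S}}\psi)\,\phi_0^{*}=\lambda_0\int_{\mathcal{S}}\psi\,\phi_0^{*}$, forces $\lambda_0=0$, while simplicity rules out a principal eigenfunction with zeros; your exhaustive-dichotomy step then correctly upgrades the two implications to the stated equivalences. Two points you leave implicit but should flag: connectedness of $\mathcal{S}$ is needed to pass from ``$\psi$ nowhere zero'' to ``$\psi$ of one sign,'' and the Krein--Rutman-type facts (reality, simplicity, positivity, and that only $\lambda_0$ admits a sign-definite eigenfunction) are nontrivial for this non-self-adjoint operator --- they are precisely the ``various Lemmas'' alluded to here and proved in \cite{Booth1} --- though invoking them by name, as you do, is legitimate.
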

(The norm here is taken with respect to the metric $h_{ab}$.) The proof of the result follows the establishment of various Lemmas.


\section{Symmetry and instability}\label{sec3}


Let $(\tilde{\Sigma},h_{ab},K_{ab})$ be an ID set and it will be assumed that $\tilde{\Sigma}$ is foliated by 2-surfaces which have spacelike unit normal $n^a$. Let one of these leaves be a MOTS $\mathcal{S}$ in $\tilde{\Sigma}$. We construct the null normals to $\mathcal{S}$ as in \eqref{nullv} so that the spacetime metric induces the 2-metric $q_{AB}$ on $\mathcal{S}$ as

\begin{eqnarray*}
q_{AB}e_a^Ae_b^B=g_{ab}+k_{(a}l_{b)},
\end{eqnarray*}
where $e_a^A$ are the operators which pulls back/pushes forward vectors and tensors between the spaces. Let $\tau^a$ be an everywhere tangent vector field on $\mathcal{S}$, and assume a decomposable vector field $x^a$ in $\tilde{\Sigma}$ according to

\begin{eqnarray}
x^a=\alpha n^a+\tau^a,\label{vec1}
\end{eqnarray}
with $\alpha$ being a smooth function on $\tilde{\Sigma}$. It is further assumed that $\alpha$ does not vanish identically on $\mathcal{S}$. (We clarify that we can have a symmetry of $\tilde{\Sigma}$ with a $u^a$ component which will be a more general consideration. Here we are interested in a vector that lies entirely in $\tilde{\Sigma}$.) With \eqref{vec1}, the first symmetry condition of \eqref{stab1} can be recast as

\begin{eqnarray}
0=\alpha\mathcal{L}_nh_{ab}+2\left(n_{(a}D_{b)}\alpha+D_{(a}\tau_{b)}\right),\label{can2}
\end{eqnarray}

To begin with, it is of interest to know when or when not the vector $x^a$ being a symmetry of $\tilde{\Sigma}$, is also a symmetry of the MOTS. Of course for $\tau^a=0$, $x^a$ is not a symmetry of $\mathcal{S}$ and trivially is nowhere tangent to $\mathcal{S}$. In such a case the MOTS is marginally stable by Theorem \ref{boot1}. So we consider those cases with non-trivial $\tau^a$. We present this as the following lemma:

\begin{lemma}\label{lem1}
Consider an ID set $(\tilde{\Sigma},h_{ab},K_{ab})$ and let \eqref{vec1} be a symmetry of $\tilde{\Sigma}$. Let $\mathcal{S}$ be a MOTS in $\tilde{\Sigma}$ and suppose the tangential component of $x^a$ along $\mathcal{S}$ is divergence-free. Then either $x^a$ is confined to $\mathcal{S}$ or $\mathcal{S}$ is minimal.
\end{lemma}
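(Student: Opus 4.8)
The plan is to collapse the tensorial symmetry identity \eqref{can2} to a single scalar equation on $\mathcal{S}$ by contracting it with the projector onto $\mathcal{S}$, and then to play the divergence-free hypothesis against the elliptic character of the stability operator. Concretely, I would contract \eqref{can2} with $q^{ab}$, regarded on $\tilde{\Sigma}$ as $h^{ab}-n^an^b$. The middle term $2q^{ab}n_{(a}D_{b)}\alpha$ vanishes because $q^{ab}n_a=0$; the first term yields $2\alpha\mathcal{Z}_2$ once one recognizes $q^{ab}\mathcal{L}_nh_{ab}=2q^{ab}D_an_b=2\mathcal{Z}_2$ (the mean curvature of $\mathcal{S}$, since $n^a$ is unit so the $n^an^b$ piece drops); and the last term is $2\mathcal{D}_a\tau^a$, because for $\tau^a$ tangent to $\mathcal{S}$ the quantity $q^{ab}D_{(a}\tau_{b)}$ reduces to the intrinsic divergence, the normal corrections cancelling under the projection. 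Altogether this gives $\alpha\mathcal{Z}_2+\mathcal{D}_a\tau^a=0$ on $\mathcal{S}$, so the hypothesis $\mathcal{D}_a\tau^a=0$ leaves the pointwise relation $\alpha\mathcal{Z}_2=0$ on $\mathcal{S}$.

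Next I would promote this pointwise identity to the stated alternative. If $\alpha$ vanishes identically on $\mathcal{S}$, then $x^a=\tau^a$ there, i.e.\ $x^a$ is everywhere tangent to $\mathcal{S}$ and is confined to $\mathcal{S}$ — the first case. If instead $\alpha\not\equiv0$ on $\mathcal{S}$, then $x^a$ is not everywhere tangent to $\mathcal{S}$, hence not a symmetry of $\mathcal{S}$, so Theorem \ref{boot1} applies and forces $L_{\mathcal{S}}\alpha=0$. Since $L_{\mathcal{S}}$ is a second-order linear elliptic operator with smooth coefficients on the closed surface $\mathcal{S}$, a nonzero solution cannot vanish on any nonempty open set (strong unique continuation), so $\{\alpha=0\}$ has empty interior; but $\alpha\mathcal{Z}_2=0$ forces $\{\mathcal{Z}_2\neq0\}\subseteq\{\alpha=0\}$, and $\{\mathcal{Z}_2\neq0\}$ is open by continuity of the mean curvature, hence empty. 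Thus $\mathcal{Z}_2\equiv0$ and $\mathcal{S}$ is minimal — the second case. (In particular, under the running assumption that $\alpha$ does not vanish identically on $\mathcal{S}$, only the minimal alternative survives.)

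The contraction and the bookkeeping of normal terms are routine; the substantive step — and the one I expect to be the crux — is passing from the pointwise identity $\alpha\mathcal{Z}_2=0$ to the global dichotomy, which rests on the fact that a nonzero eigenfunction of the elliptic operator $L_{\mathcal{S}}$ has nodal set with empty interior. One must also be careful to invoke Theorem \ref{boot1} only in the branch $\alpha\not\equiv0$, since that is precisely when its hypothesis (that $x^a$ fails to be a symmetry of $\mathcal{S}$) is met.
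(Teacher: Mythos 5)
Your computation is exactly the paper's proof: projecting \eqref{can2} onto $\mathcal{S}$ with $q^{ab}$ kills the $n_{(a}D_{b)}\alpha$ term and yields $\alpha\mathcal{Z}_2+\mathcal{D}_a\tau^a=0$, which is the paper's equation \eqref{can41} (your overall factor of $2$ is harmless). Where you genuinely differ is in what comes next. The paper stops there, asserting that the conclusion ``follows'' from \eqref{can41} with $\mathcal{D}_a\tau^a=0$; strictly, the pointwise identity $\alpha\mathcal{Z}_2=0$ only says that at each point one of the two factors vanishes, not that $\alpha\equiv0$ or $\mathcal{Z}_2\equiv0$ globally. You close that gap: in the branch $\alpha\not\equiv0$ the vector $x^a$ is not everywhere tangent to $\mathcal{S}$, hence (by the paper's definition) not a symmetry of $\mathcal{S}$, so Theorem \ref{boot1} gives $L_{\mathcal{S}}\alpha=0$, and strong unique continuation for the second-order elliptic operator $L_{\mathcal{S}}$ forces the nodal set of $\alpha$ to have empty interior, whence the open set $\{\mathcal{Z}_2\neq0\}\subseteq\{\alpha=0\}$ is empty and $\mathcal{S}$ is minimal. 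This is a valid and more rigorous route; its cost is importing Theorem \ref{boot1} and elliptic unique continuation (which also requires $\mathcal{S}$ to be connected --- implicit in the paper, which later takes $\mathcal{S}$ to be a topological $2$-sphere), whereas the paper's one-line argument buys brevity at the price of leaving the pointwise-to-global passage unjustified. Your parenthetical observation that, under the standing assumption $\alpha\not\equiv0$ on $\mathcal{S}$, only the minimal alternative survives is also consistent with how the paper uses the lemma afterwards.
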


\begin{proof}
The projection of \eqref{can2} to the MOTS $\mathcal{S}$ gives the equation

\begin{eqnarray}
\alpha\mathcal{Z}_2+\mathcal{D}_a\tau^a=0,\label{can41}
\end{eqnarray}
from which the conclusion follows.\qed

\end{proof}

Indeed, the divergence-free condition implies and is implied by the condition that $\tau^a$ is a symmetry of the MOTS $\mathcal{S}$. Thus, if $\mathcal{S}$ is non-minimal, it follows that the symmetry of $\tilde{\Sigma}$ is inherited by $\mathcal{S}$ since $x^a$ and $\tau^a$ coincide.

That $x^a$ is a symmetry of the $\mathcal{S}$ implies \eqref{can41}. As we are interested in the case where the MOTS does not inherit the symmetry of the slice, two conditions accommodating this case will be the following:

\begin{enumerate}
\item $\mathcal{D}_a\tau^a\neq0$; 

\item $\tau^a$ is divergence-free and $\mathcal{S}$ is minimal.
\end{enumerate}

Of course it is possible that $\mathcal{S}$ is minimal without $\mathcal{Z}_2$ vanishing on all of $\tilde{\Sigma}$, and the relevance of this point will be established shortly. Let's start by noting that one can write the divergence $\mathcal{Z}_1$ as the projection to $\mathcal{S}$ of the extrinsic curvature of $\tilde{\Sigma}$:

\begin{eqnarray}
\mathcal{Z}_1=q_{ab}K^{ab}.
\end{eqnarray}
Then, taking the Lie derivative of the null expansion $\theta_k$ along $x^a$ and noting that $\mathcal{L}_xK_{ab}=0$ we can write

\begin{eqnarray}
K^{ab}\mathcal{L}_xq_{ab}=\alpha\mathcal{Z}_1'-\tau^a\mathcal{D}_a\mathcal{Z}_2,\label{ext1}
\end{eqnarray}
where the ``prime'' denotes derivative along  the $n^a$ direction.

Now, a few comments can be drawn from the following vanishing condition:

\begin{eqnarray}
\alpha\mathcal{Z}_1'-\tau^a\mathcal{D}_a\mathcal{Z}_2=0.\label{ext2}
\end{eqnarray}

If an existence of a solution to the above equation \eqref{ext2} is ruled out (existence of a ``solution'' should be interpreted here as the equation holds somewhere in the slice), then neither can $\tilde{\Sigma}$ be extrinsically flat nor $x^a$ a symmetry of $\mathcal{S}$. In the case of a totally umbilic $\tilde{\Sigma}$, that is $K_{ab}$ scales $h_{ab}$ by some function in $\tilde{\Sigma}$ (call this function $f$), \eqref{ext2} will hold everywhere in $\tilde{\Sigma}$ given that $x^a$ is a symmetry of $\tilde{\Sigma}$. Another scenario is where the equation \eqref{ext2} holds locally on $\mathcal{S}$ and the extrinsic curvature is non-zero on $\mathcal{S}$, in which case $x^a$ is a symmery of $\mathcal{S}$, ruling out the applicability of the results of \cite{Booth1}.

For an extrinsically flat slice, $\mathcal{S}$ will be minimal. Then, we see that either $\alpha$ vanishes and the symmetry vector is also a symmetry of the MOTS (in which case instability is not assessible from our considerations) or $\mathcal{Z}_1$ is constant along $n^a$. On the other hand, if the restriction $\restr{\mathcal{Z}_1'}{\mathcal{S}}$, which we refer to as the normal variation of $\mathcal{Z}_1$, vanishes somewhere along $\mathcal{S}$, then the mean curvature must be constant along the integral curves of $\tau^a$ through that point. And conversely, wherever the mean curvature is constant on $\mathcal{S}$, the normal variation of $\mathcal{Z}_1$ will vanish there.

From the above discussions we are in the position to present some results:

\begin{proposition}\label{pro1}
Consider an ID set $(\tilde{\Sigma},h_{ab},K_{ab})$ and let \eqref{vec1} be a symmetry of $\tilde{\Sigma}$. Let $\mathcal{S}$ be a MOTS in $\tilde{\Sigma}$ and suppose that the component of $x^a$ along $\mathcal{S}$ has a nowhere vanishing divergence. Then, $L_{\mathcal{S}}$ admits the zero eigenvalue with associated eigenfunction $\alpha$.
\end{proposition}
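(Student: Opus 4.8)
The plan is to deduce the statement directly from Theorem~\ref{boot1}. The decisive elementary fact is that, because $n^a$ is a unit spacelike normal to $\mathcal{S}$ in $\tilde{\Sigma}$ and $\tau^a$ is tangent to $\mathcal{S}$, the decomposition \eqref{vec1} gives $x^an_a=\alpha$ on $\mathcal{S}$. Hence the eigenfunction furnished by Theorem~\ref{boot1} is precisely $\alpha$, provided one can check that its hypotheses hold here, i.e.\ that $x^a$ is a symmetry of $\tilde{\Sigma}$ (true by assumption) but is \emph{not} a symmetry of $\mathcal{S}$.

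First I would note that \eqref{can41}, namely $\alpha\mathcal{Z}_2+\mathcal{D}_a\tau^a=0$, holds on $\mathcal{S}$ simply because $x^a$ is a symmetry of $\tilde{\Sigma}$: it is the projection of \eqref{can2} to $\mathcal{S}$, exactly as used in the proof of Lemma~\ref{lem1}. Therefore $\alpha\mathcal{Z}_2=-\mathcal{D}_a\tau^a$ pointwise on $\mathcal{S}$, and the hypothesis that $\mathcal{D}_a\tau^a$ is nowhere vanishing forces $\alpha$ to be nowhere vanishing on $\mathcal{S}$ (and, incidentally, $\mathcal{Z}_2$ as well). In particular $\alpha$ is a genuine, nontrivial candidate eigenfunction, consistent with the standing assumption that $\alpha$ not vanish identically on $\mathcal{S}$.

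Next I would rule out $x^a$ being a symmetry of $\mathcal{S}$. By definition a symmetry of $\mathcal{S}$ is a symmetry of $\tilde{\Sigma}$ that is everywhere tangent to $\mathcal{S}$, and tangency at a point $p$ is equivalent to $\alpha(p)=x^an_a(p)=0$. Since $\alpha$ is nowhere zero on $\mathcal{S}$, $x^a$ is tangent to $\mathcal{S}$ at no point, so it cannot be a symmetry of $\mathcal{S}$. With $\mathcal{S}$ a closed MOTS and $x^a$ a symmetry of $\tilde{\Sigma}$ but not of $\mathcal{S}$, Theorem~\ref{boot1} applies verbatim and gives that $0$ is an eigenvalue of $L_{\mathcal{S}}$ with eigenfunction $x^an_a=\alpha$, which is the assertion.

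The argument is short, and the only step that needs to be stated with care --- the sole potential obstacle --- is the implication ``$\mathcal{D}_a\tau^a\neq0$ everywhere on $\mathcal{S}$ $\Rightarrow$ $x^a$ is not a symmetry of $\mathcal{S}$'', which is exactly where one uses \eqref{can41} together with the identification $x^an_a=\alpha$. No calculation beyond the projection already performed for Lemma~\ref{lem1} is needed; in particular one does not need to re-derive $L_{\mathcal{S}}\alpha=\delta_{\alpha n}\theta_k=0$ by hand, since that is precisely the content of the cited Theorem~\ref{boot1}.
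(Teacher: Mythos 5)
Your proposal is correct and takes essentially the same route as the paper: it uses the projected symmetry condition \eqref{can41} to conclude from the nowhere-vanishing divergence of $\tau^a$ that $\alpha=x^an_a$ is nowhere zero on $\mathcal{S}$, hence $x^a$ is a symmetry of $\tilde{\Sigma}$ but not of $\mathcal{S}$, and then invokes Theorem~\ref{boot1} to obtain the zero eigenvalue with eigenfunction $x^an_a=\alpha$. The paper gives no separate displayed proof of Proposition~\ref{pro1}; its surrounding discussion is precisely this argument.
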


It is indeed clear that the condition that $\mathcal{D}_a\tau^a$ is nowhere vanishing ensures that $\mathcal{S}$ cannot lie in an extrinsically flat $\tilde{\Sigma}$. On the other hand if \eqref{vec1} is a symmetry of $\tilde{\Sigma}$ but not of $\mathcal{S}$, then $\mathcal{S}$ must be a minimal MOTS in an extrinsically flat $\tilde{\Sigma}$, i.e. \eqref{ext2} can be solved over $\tilde{\Sigma}$.

Notice that if it is imposed that the divergence of $\tau^a$ is nowhere vanishing as in the above result, then neither $\mathcal{Z}_2$ nor $\alpha$ has a zero on $\mathcal{S}$. Therefore, their smoothness will imply sign definiteness, i.e. $\alpha$ is sign definite by \eqref{can41}. Therefore the zero eigenvalue is the principal eigenvalue of $L_{\mathcal{S}}$. As a corollary to Proposition \ref{pro1} it then follows that 

\begin{corollary}\label{cor1}
Consider an ID set $(\tilde{\Sigma},h_{ab},K_{ab})$ and let \eqref{vec1} be a symmetry of $\tilde{\Sigma}$. Let $\mathcal{S}$ be a MOTS in $\tilde{\Sigma}$ and suppose that the component of $x^a$ along $\mathcal{S}$ has a nowhere vanishing divergence. Then, $\mathcal{S}$ is marginally stable.
\end{corollary}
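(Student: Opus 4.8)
The plan is to combine Proposition~\ref{pro1} with the standard characterization of the principal eigenvalue of the MOTS stability operator. By Proposition~\ref{pro1}, under the hypothesis that $\mathcal{D}_a\tau^a$ is nowhere vanishing on $\mathcal{S}$, the value $0$ belongs to the spectrum of $L_{\mathcal{S}}$ with eigenfunction $\alpha$. It therefore suffices to show that this eigenvalue is in fact the principal eigenvalue $\lambda_0$, for then $\lambda_0=0$ and $\mathcal{S}$ is marginally stable by the trichotomy recalled in Section~\ref{sec2}.

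The key observation is that $\alpha$ is sign definite on $\mathcal{S}$. First I would invoke the projected symmetry relation \eqref{can41}, $\alpha\mathcal{Z}_2+\mathcal{D}_a\tau^a=0$; since $\mathcal{D}_a\tau^a$ does not vanish anywhere on $\mathcal{S}$, the product $\alpha\mathcal{Z}_2$ cannot vanish anywhere, so in particular $\alpha$ has no zero on $\mathcal{S}$. As $\alpha$ is smooth and $\mathcal{S}$ is (closed and) connected, $\alpha$ is either everywhere positive or everywhere negative, i.e. sign definite. This is exactly the remark made just before the statement of the corollary.

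Next I would recall the spectral property of $L_{\mathcal{S}}$: although $L_{\mathcal{S}}$ need not be self-adjoint, it is a second-order elliptic operator on the closed surface $\mathcal{S}$, and by the Krein--Rutman-type argument underlying the analysis of \cite{and1} its principal eigenvalue $\lambda_0$ is real, simple, and is the \emph{only} eigenvalue admitting a nowhere vanishing (equivalently, sign definite) eigenfunction. Applying this to the eigenpair $(0,\alpha)$ furnished by Proposition~\ref{pro1}, the sign definiteness of $\alpha$ forces $0=\lambda_0$, and hence $\mathcal{S}$ is marginally stable.

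The only delicate point is the appeal to uniqueness of the sign-definite eigenfunction for the non-self-adjoint operator $L_{\mathcal{S}}$; this is what makes the strict/marginal/unstable trichotomy well posed and is standard in the MOTS literature, but it is the step that genuinely does the work here, the remainder being bookkeeping already assembled in Proposition~\ref{pro1} and the discussion preceding the corollary (in particular that the nowhere-vanishing divergence also rules out an extrinsically flat $\tilde{\Sigma}$, so the hypotheses are consistent).
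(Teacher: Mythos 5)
Your proposal is correct and follows essentially the same route as the paper: the paper also uses \eqref{can41} to conclude that $\alpha$ (and $\mathcal{Z}_2$) is nowhere zero, hence sign definite by smoothness, and then identifies the zero eigenvalue of Proposition~\ref{pro1} with the principal eigenvalue $\lambda_0$ via the standard fact that only the principal eigenvalue admits a sign-definite eigenfunction. Your explicit flagging of connectedness and of the Krein--Rutman-type uniqueness is just a more careful spelling-out of the same argument.
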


Next, we may provide a characterization of the zero eigenvalue as before using \eqref{ext1}:

\begin{proposition}\label{pro2}
Consider an ID set $(\tilde{\Sigma},h_{ab},K_{ab})$ with symmetry vector \eqref{vec1}, and let $\mathcal{S}$ be a MOTS in $\tilde{\Sigma}$. If the equation \eqref{ext2} fails at some $p\in\mathcal{S}$, then $L_{\mathcal{S}}$ admits the zero eigenvalue with associated eigenfunction $\alpha$. 
\end{proposition}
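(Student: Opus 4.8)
The plan is to deduce the statement from Theorem \ref{boot1} by showing that the hypothesis forces $x^a$ to fail to be a symmetry of $\mathcal{S}$. First I would record the elementary fact that the relevant eigenfunction is $\alpha$: contracting the decomposition \eqref{vec1} with $n_a$ and using that $n^a$ is the unit normal ($n^an_a=1$) while $\tau^a$ is tangent to $\mathcal{S}$ ($\tau^an_a=0$) gives $x^an_a=\alpha$ on $\mathcal{S}$. So the content of Theorem \ref{boot1} that we want to trigger is exactly that $0$ is an eigenvalue of $L_{\mathcal{S}}$ with eigenfunction $\alpha$, and all that remains is to verify its hypotheses, the nontrivial one being that $x^a$ is not a symmetry of $\mathcal{S}$.

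The key step is the contrapositive observation: \emph{if} $x^a$ is a symmetry of $\mathcal{S}$, then equation \eqref{ext2} holds everywhere on $\mathcal{S}$. Indeed, a symmetry of $\mathcal{S}$ is everywhere tangent to $\mathcal{S}$, so its flow preserves $\mathcal{S}$ and, being simultaneously an isometry of $(\tilde{\Sigma},h_{ab})$, it Lie-drags the induced metric, $\mathcal{L}_xq_{ab}=0$ on $\mathcal{S}$. Substituting this into the already-established identity \eqref{ext1} makes the left-hand side $K^{ab}\mathcal{L}_xq_{ab}$ vanish identically on $\mathcal{S}$, so that $\alpha\mathcal{Z}_1'-\tau^a\mathcal{D}_a\mathcal{Z}_2=0$ at every point of $\mathcal{S}$, which is \eqref{ext2}. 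Reading this backwards: if \eqref{ext2} fails at some $p\in\mathcal{S}$, then the left-hand side of \eqref{ext1} is non-zero at $p$, hence $\mathcal{L}_xq_{ab}$ does not vanish at $p$, and therefore $x^a$ is not a symmetry of $\mathcal{S}$ (in particular this already excludes the degenerate case $\alpha\equiv0$ on $\mathcal{S}$, so the eigenfunction produced below is non-trivial). With $\mathcal{S}$ a MOTS, $x^a$ a symmetry of $(\tilde{\Sigma},h_{ab},K_{ab})$ but not of $\mathcal{S}$, Theorem \ref{boot1} then yields that $0$ is an eigenvalue of $L_{\mathcal{S}}$ with eigenfunction $x^an_a=\alpha$, which is the claim.

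The one point requiring a little care — rather than a genuine obstacle — is the justification that a symmetry of $\mathcal{S}$ annihilates the precise tensor $\mathcal{L}_xq_{ab}$ appearing in \eqref{ext1}: one should be explicit that $q_{ab}=h_{ab}-n_an_b$ there is the two-dimensional projector, that the flow of a symmetry of $\mathcal{S}$ also preserves the unit normal $n^a$ along $\mathcal{S}$, and hence that $\mathcal{L}_xq_{ab}=0$ on $\mathcal{S}$; once this is in place the argument is a one-line substitution into \eqref{ext1} followed by an appeal to Theorem \ref{boot1}.
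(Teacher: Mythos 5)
Your proposal is correct and follows the same route the paper intends (the paper leaves Proposition \ref{pro2} without a printed proof, but its surrounding discussion — ``\eqref{ext2} admitting no solution ensures $x^a$ is not a symmetry of $\mathcal{S}$'' — is exactly your contrapositive through the identity \eqref{ext1}, followed by an appeal to Theorem \ref{boot1} with eigenfunction $x^an_a=\alpha$). You merely make explicit the steps the author treats as understood, including the observation that failure of \eqref{ext2} at $p$ forces $\mathcal{L}_xq_{ab}\neq0$ there and hence rules out $\alpha\equiv0$ on $\mathcal{S}$.
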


Recalling that $\alpha$ is nonzero whenever $x^a$ is not tangent to $\mathcal{S}$, we can state the following instability result.

\begin{theorem}\label{th1}
Consider an ID set $(\tilde{\Sigma},h_{ab},K_{ab})$ and let \eqref{vec1} be a symmetry of $\tilde{\Sigma}$. Let $\mathcal{S}$ be a MOTS in $\tilde{\Sigma}$ with no minimal points and suppose \eqref{ext2} fails at some $p\in\mathcal{S}$. If $\mathcal{D}_a\tau^a$ vanishes somewhere on $\mathcal{S}$, $\mathcal{S}$ is unstable.
\end{theorem}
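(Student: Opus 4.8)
The plan is to read off the zero eigenvalue of $L_{\mathcal{S}}$ from Proposition \ref{pro2}, then show that its eigenfunction $\alpha$ is forced to vanish somewhere on $\mathcal{S}$; since the principal eigenfunction of a second-order elliptic operator on a closed surface has no zeros, this pushes the principal eigenvalue strictly below zero. For the first step, because \eqref{ext2} fails at some $p\in\mathcal{S}$, Proposition \ref{pro2} gives that $0$ is an eigenvalue of $L_{\mathcal{S}}$ with eigenfunction $\alpha$, and by the standing assumption that $\alpha$ is not identically zero on $\mathcal{S}$ (stated right after \eqref{vec1}) this is a genuine, nontrivial eigenfunction.

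Next I would produce a zero of $\alpha$. Let $q\in\mathcal{S}$ be a point at which $\mathcal{D}_a\tau^a=0$, which exists by hypothesis. Evaluating the projected symmetry identity \eqref{can41}, $\alpha\mathcal{Z}_2+\mathcal{D}_a\tau^a=0$, at $q$ gives $\alpha(q)\,\mathcal{Z}_2(q)=0$. On a MOTS $\theta_k\equiv 0$, so \eqref{mot1} yields $\theta_l=-2\mathcal{Z}_2$ on $\mathcal{S}$, whence the minimal points of $\mathcal{S}$ are precisely the zeros of $\mathcal{Z}_2$; the no-minimal-points hypothesis therefore gives $\mathcal{Z}_2(q)\neq 0$, and hence $\alpha(q)=0$.

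Finally I would conclude by the Krein--Rutman/Perron--Frobenius theory for the (in general non-self-adjoint) operator $L_{\mathcal{S}}$ on the closed surface $\mathcal{S}$: the principal eigenvalue $\lambda_0$ is real and simple and admits a nowhere-vanishing eigenfunction. Since $\alpha$ is a nontrivial eigenfunction for the eigenvalue $0$ that vanishes at $q$, it is not a scalar multiple of the principal eigenfunction, so $0\neq\lambda_0$; as $0$ lies in the spectrum and $\lambda_0$ is the eigenvalue of least real part, this forces $\lambda_0<0$, i.e. $\mathcal{S}$ is unstable. Alternatively, once $\alpha(q)=0$ is in hand one may finish directly from Theorem \ref{boot1}: $x^a$ is tangent to $\mathcal{S}$ at $q$, while $x^a$ is a symmetry of $\tilde{\Sigma}$ and not of $\mathcal{S}$ (the latter because $\alpha\not\equiv 0$), so part (2) yields instability.

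The only non-routine ingredient is the spectral step — that a zero of the eigenfunction rules out $0$ being the principal eigenvalue — which rests on the Krein--Rutman-type properties of $L_{\mathcal{S}}$ (simplicity of $\lambda_0$ and strict sign-definiteness of its eigenfunction), and should be attributed to \cite{and1}; it also tacitly uses connectedness of $\mathcal{S}$, consistent with $\mathcal{S}$ being a closed embedded surface. The remaining steps are just bookkeeping with \eqref{can41} and the identity $\theta_l=-2\mathcal{Z}_2$ along the MOTS.
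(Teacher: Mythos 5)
Your proof is correct and, in its ``alternatively'' form, is essentially the paper's own argument: the paper likewise uses \eqref{can41} together with the no-minimal-points hypothesis to force $\alpha$ to vanish at a point where $\mathcal{D}_a\tau^a$ does, notes that $x^a$ is not a symmetry of $\mathcal{S}$ (the paper justifies this via the failure of \eqref{ext2}, you via $\alpha\not\equiv 0$), and then concludes from Theorem \ref{boot1}(2). Your primary route simply unpacks the Krein--Rutman simplicity/positivity argument that underlies Proposition \ref{pro2} and Theorem \ref{boot1} rather than citing them as black boxes, so there is no substantive difference in approach.
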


\begin{proof}
For the proof we simply need to establish that under the assumptions of the theorem $x^a$ is tangent to $\mathcal{S}$ somewhere (of course $x^a$ cannot be everywhere tangent since it is not a symmetry of $\mathcal{S}$ by virtue of the fact that \eqref{ext2} admits no solution), i.e. that $\alpha$ vanishes somewhere. The equation \eqref{ext2} admiting no solution ensures $x^a$ is not a symmetry of $\mathcal{S}$ (and of course $\tilde{\Sigma}$ cannot be extrinsically flat). Since $\mathcal{S}$ has no minimal points, where $\mathcal{D}_a\tau^a$ vanishes on $\mathcal{S}$, we must have $\alpha$ vanishing there.\qed
\end{proof}

Furthermore, notice that if the normal variation of $\mathcal{Z}_1$ does not vanish, then $\alpha$ on $\mathcal{S}$ will have a zero whenever the extrinsic curvature vanishes somewhere on $\mathcal{S}$ provided that $\tau^a\mathcal{D}_a\mathcal{Z}_2$ vanishes there. This implies the following instability result:

\begin{theorem}\label{th2}
Consider an ID set $(\tilde{\Sigma},h_{ab},K_{ab})$, let $\mathcal{S}$ be a MOTS in $\tilde{\Sigma}$, and suppose \eqref{vec1} is a symmetry of $\tilde{\Sigma}$ but not of $\mathcal{S}$. Let $p\in\mathcal{S}$ be a point such that $Z_2$ is constant along all points of integral curves of $\tau^a$ through $p$ where the extrinsic curvature vanishes. If the normal variation of $\mathcal{Z}_1$ is non-vanishing, $\mathcal{S}$ is unstable.
\end{theorem}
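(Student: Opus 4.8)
The plan is to deduce instability from Theorem~\ref{boot1} by producing a single point of $\mathcal{S}$ at which the symmetry vector becomes tangent. Since $x^a=\alpha n^a+\tau^a$ is a symmetry of $\tilde{\Sigma}$ but not of $\mathcal{S}$, Theorem~\ref{boot1} applies: $0$ is an eigenvalue of $L_{\mathcal{S}}$ with eigenfunction $x^an_a=\alpha$ (using $n^an_a=1$ and $\tau^an_a=0$), and $\mathcal{S}$ is unstable precisely when $x^a$ is tangent to $\mathcal{S}$ at some point, i.e. precisely when $\alpha$ has a zero on $\mathcal{S}$. Because $x^a$ is assumed not to be a symmetry of $\mathcal{S}$ it cannot be everywhere tangent, so $\alpha$ is not identically zero and a single zero of $\alpha$ places us in case~(2). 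Hence the whole task reduces to exhibiting $q\in\mathcal{S}$ with $\alpha(q)=0$.

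First I would extract from the hypothesis a point $q$ lying on the integral curve of $\tau^a$ through $p$ at which the extrinsic curvature $K_{ab}$ of $\tilde{\Sigma}$ vanishes and at which, in addition, $\tau^a\mathcal{D}_a\mathcal{Z}_2=0$; this is what the clause requiring $\mathcal{Z}_2$ to be constant along that curve at the points where the extrinsic curvature vanishes is meant to furnish, and since $\tau^a$ is tangent to $\mathcal{S}$ the curve, hence $q$, lies in $\mathcal{S}$. I would then evaluate \eqref{ext1}, $K^{ab}\mathcal{L}_xq_{ab}=\alpha\mathcal{Z}_1'-\tau^a\mathcal{D}_a\mathcal{Z}_2$, at $q$: the left-hand side vanishes there because $K^{ab}=0$, and the second term on the right vanishes by choice of $q$, so $\alpha(q)\,\mathcal{Z}_1'(q)=0$. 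Since the normal variation $\restr{\mathcal{Z}_1'}{\mathcal{S}}$ is non-vanishing we have $\mathcal{Z}_1'(q)\neq0$, whence $\alpha(q)=0$; thus $x^a$ is tangent to $\mathcal{S}$ at $q$ and Theorem~\ref{boot1}(2) yields that $\mathcal{S}$ is unstable.

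The substitution into \eqref{ext1} is routine; the delicate point is the extraction of $q$. Note that wherever $K_{ab}=0$ on $\mathcal{S}$ one also has $\mathcal{Z}_1=q_{ab}K^{ab}=0$, and then the MOTS condition $\theta_k=\mathcal{Z}_1+\mathcal{Z}_2=0$ forces $\mathcal{Z}_2=0$ there as well, so such points $q$ are necessarily minimal points of $\mathcal{S}$. Consequently $\mathcal{Z}_2(q)=0$ alone does not give $\tau^a\mathcal{D}_a\mathcal{Z}_2(q)=0$, and the hypothesis must be understood as guaranteeing that the zero locus of $K_{ab}$ along the integral curve is rich enough at $q$ (for instance, contains a segment of the curve, on which $\mathcal{Z}_2\equiv0$) that the tangential derivative of $\mathcal{Z}_2$ vanishes there. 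Pinning down this clause precisely — and, with it, the existence of such a $q$ — is the main obstacle; once $q$ is available the conclusion is immediate.
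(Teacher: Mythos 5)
Your argument is exactly the paper's: the paper gives no separate proof environment for this theorem, but the discussion immediately preceding it evaluates \eqref{ext1} at a point where the extrinsic curvature vanishes and $\tau^a\mathcal{D}_a\mathcal{Z}_2=0$, concludes $\alpha=0$ there since $\mathcal{Z}_1'\neq0$, and invokes Theorem~\ref{boot1}(2), which is precisely your chain of reasoning. Your reading of the (somewhat loosely worded) hypothesis as furnishing such a point $q$ on the integral curve through $p$ matches the paper's intent, so the proposal is correct and essentially identical in approach.
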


Finally we point out a condition on the mean curvature of $\mathcal{S}$ when the surface of interest is a ``genuine'' surface. That is, when the Lie bracket of $u^a$ and $n^a$ vanishes in addition to $\mathcal{D}_a\mathcal{D}_b$ commuting when acting on scalars on $\mathcal{S}$.

Taking the Lie derivative of equation \eqref{can41} along $x^a$ we have

\begin{align}
0&=\alpha(\alpha\mathcal{Z}_2)'+\tau^a\mathcal{D}_a(\alpha\mathcal{Z}_2)+\mathcal{L}_x(\mathcal{D}_b\tau^b)\nonumber\\
&=\alpha(\alpha\mathcal{Z}_2)'+\tau^a\mathcal{D}_a(\alpha\mathcal{Z}_2+\mathcal{D}_b\tau^b),\label{lie2}
\end{align}
where the additional term $(\mathcal{D}_a\tau^a)'$ of $\mathcal{L}_x(\mathcal{D}_b\tau^b)$ vanishes since the divergence of $\tau^a$ is defined entirely on the surface. Suppose that $\alpha$ is nowhere zero. Since \eqref{can41} holds on the MOTS $\mathcal{S}$, the second derivative in the second term of the second equation of \eqref{lie2} is zero and \eqref{lie2} reduces to

\begin{eqnarray}
(\alpha\mathcal{Z}_2)'=0.\label{lie3}
\end{eqnarray}
By applying the product $n^an^b$ to \eqref{can2} we find

\begin{eqnarray}
\alpha'=\tau^an_a'.
\end{eqnarray}

In the case of a genuine surface and not a pseudo-surface, necessarily the restriction of $n_a'$ to $\mathcal{S}$ vanishes. Thus $\restr{\alpha'}{\mathcal{S}}=0$. By \eqref{lie3}, it follows that $\restr{\mathcal{Z}_2'}{\mathcal{S}}=0$. That is, the variation of the mean curvature of $\mathcal{S}$ along the unit normal vanishes. So, if one starts with a minimal $\mathcal{S}$, this property is maintained if one deforms $\mathcal{S}$ along $n^a$.

Now, because $\mathcal{S}$ is closed, upon integrating \eqref{can41} over $\mathcal{S}$ to get

\begin{eqnarray}
\int_{\mathcal{S}}\alpha\mathcal{Z}_2=0.\label{integ}
\end{eqnarray}
If there are no minimal points of $\mathcal{S}$, $\mathcal{Z}_2$ has a definite sign. Thus $\alpha$ must change sign over $\mathcal{S}$, i.e. $\alpha$ must vanish somewhere, and therefore, the requirement on the divergence of $\tau^a$ in Theorem \ref{th1} can be dropped as the divergence will vanish where $\alpha$ does. 

\begin{remark}[Remark.]
If $\alpha$ is nowhere zero, then by \eqref{integ} $\mathcal{S}$ has at least one minimal point. Thus, by \eqref{can41}, results that rely on the nowhere vanishing of the divergence, for example Proposition \ref{pro1} and Corollary \ref{cor1}, are not applicable.
\end{remark}

Let it be pointed out that since $\mathcal{S}$ is closed $\mathcal{D}_a\mathcal{Z}_2=0$ at some point $p\in\mathcal{S}$. In particular, $\mathcal{S}$ is a topological 2-sphere. So, for a genuine surface, \eqref{ext2} admitting no solution on $\mathcal{S}$ requires $\alpha\neq0$ at that point $p$ (and similarly for the normal variation of $\mathcal{Z}_1$).

Suppose the normal variation of $\mathcal{Z}_1$ is nonvanishing. At $p\in\mathcal{S}$ where $\mathcal{D}_a\mathcal{Z}_2=0$, $\alpha$ will indeed vanish there if the extrinsic curvature vanishes there. This suggests the following result:

\begin{theorem}\label{th2}
Consider an ID set $(\tilde{\Sigma},h_{ab},K_{ab})$, let $\mathcal{S}$ be a MOTS in $\tilde{\Sigma}$, and suppose \eqref{vec1} is a symmetry of $\tilde{\Sigma}$ but not of $\mathcal{S}$. If the normal variation of $\mathcal{Z}_1$ is non-vanishing and $\tilde{\Sigma}$ is geodesic where $\mathcal{Z}_2$ is extremal, $\mathcal{S}$ is unstable.
\end{theorem}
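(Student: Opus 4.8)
The plan is to produce a point of $\mathcal{S}$ at which the normal component $\alpha$ of the symmetry vector vanishes, and then conclude via Theorem~\ref{boot1}(2). First I would invoke compactness: since $\mathcal{S}$ is closed, the smooth function $\mathcal{Z}_2$ attains an extremum at some $p\in\mathcal{S}$, so $\mathcal{D}_a\mathcal{Z}_2=0$ at $p$, and hence also $\tau^a\mathcal{D}_a\mathcal{Z}_2=0$ there; this $p$ is precisely the point singled out by the hypothesis ``where $\mathcal{Z}_2$ is extremal.''

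Next I would evaluate the identity \eqref{ext1} at $p$. The hypothesis that $\tilde{\Sigma}$ is geodesic where $\mathcal{Z}_2$ is extremal means the extrinsic curvature $K_{ab}$ vanishes at $p$, so the left-hand side $K^{ab}\mathcal{L}_xq_{ab}$ vanishes at $p$; together with $\tau^a\mathcal{D}_a\mathcal{Z}_2=0$ there, \eqref{ext1} reduces to $\alpha\,\restr{\mathcal{Z}_1'}{\mathcal{S}}=0$ at $p$. Because the normal variation of $\mathcal{Z}_1$ is assumed nowhere vanishing on $\mathcal{S}$, we have $\restr{\mathcal{Z}_1'}{\mathcal{S}}(p)\neq0$, and therefore $\alpha(p)=0$, i.e.\ $x^a$ is tangent to $\mathcal{S}$ at $p$. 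Since $x^a$ is a symmetry of $\tilde{\Sigma}$ but, by hypothesis, not of $\mathcal{S}$, Theorem~\ref{boot1}(2) then forces $\mathcal{S}$ to be unstable, which is the claim.

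I do not expect a real obstacle here: the argument is just the pointwise reading of \eqref{ext1} already sketched in the discussion preceding the statement, and no hypothesis on minimal points of $\mathcal{S}$ is needed (unlike in Theorem~\ref{th1}) because the zero of $\alpha$ comes straight out of \eqref{ext1} rather than from \eqref{can41}. The two points requiring care are the reading of ``geodesic'' as the vanishing of $K_{ab}$ at the relevant points, so that $K^{ab}\mathcal{L}_xq_{ab}$ vanishes there pointwise irrespective of the factor $\mathcal{L}_xq_{ab}$, and the fact that \eqref{ext1} is an identity on $\mathcal{S}$ that already incorporated $\mathcal{L}_xK_{ab}=0$ from the symmetry condition \eqref{stab1}.
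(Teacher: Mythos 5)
Your argument is correct and coincides with the paper's own reasoning, which appears in the discussion immediately preceding the theorem: closedness of $\mathcal{S}$ yields a critical point $p$ of $\mathcal{Z}_2$, the geodesic (vanishing $K_{ab}$) hypothesis kills the left-hand side of \eqref{ext1} at $p$, and the nonvanishing normal variation of $\mathcal{Z}_1$ then forces $\alpha(p)=0$, so Theorem \ref{boot1}(2) gives instability. Your reading of ``geodesic'' as the vanishing of the extrinsic curvature at the extremal points of $\mathcal{Z}_2$ is exactly the paper's intended meaning, so there is nothing to add.
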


We now present a few comments for the cases of certain geometric restrictions on the MOTS.

\subsection*{Constant mean curvature MOTS:}

If a MOTS is of constant mean curvature (CMC) one has that

\begin{eqnarray}
\alpha\mathcal{Z}_1'=K^{ab}\mathcal{L}_xq_{ab}.\label{imp1}
\end{eqnarray}
If the vector $u_a'$ points along $n^a$, the right hand side vanishes: directly expand the RHS of \eqref{imp1} using $q_{ab}=h_{ab}-n_an_b$, which involves the Lie derivative of $\mathcal{K}$ (which is zero) as well as terms containing the projection $K_{ab}n^b$. Explicitly, this projection is

\begin{align}
\alpha\mathcal{Z}_1'&=-2K^{ab}n_b\mathcal{L}_xn_a\nonumber\\
&=-2\left(q^{ab}u_b'-(u^bn_b')n^a\right)\mathcal{L}_xn_a\nonumber\\
&=-2u_{\bar{a}}' \mathcal{L}_xn^a,\label{proj}
\end{align}
where the bar on the index $a$ indicates a projection to $\mathcal{S}$. which when evaluated at $\mathcal{S}$ vanishes by the assumption on $u_a'$. And in this case, if $\alpha$ is nowhere zero, then the normal variation of $\mathcal{Z}_1$ evaluated at $\mathcal{S}$ vanishes. Clearly, in order for any of the results in this work to apply, the normal variation of $\mathcal{Z}_1$ cannot be nowhere vanishing as $\alpha$ will vanish identically on $\mathcal{S}$.

\subsection*{MOTS embedded in an Einstein slice:}

To conclude, let us consider a particular restriction on the geometry of the slice. If $\tilde{\Sigma}$ is an Einstein manifold so that $R_{ab}=\lambda h_{ab}$, and noting the decomposition $q_{ab}=h_{ab}-n_an_b$, we use Gauss' equation to establish that

\begin{eqnarray}
\mathcal{R}=\frac{R}{3}+\left(\mathcal{Z}_2^2-\arrowvert y\arrowvert^2\right),\label{gauss1}
\end{eqnarray}
where $\mathcal{R}$ denotes the scalar curvatures of the slice and $y$ is the covariant 2-tensor with components (it is fully projected to $\mathcal{S}$)

\begin{eqnarray*}
y_{ab}=\mathcal{D}_an_b.
\end{eqnarray*}
From the above we may also infer the following statements regarding the stability and the topology of the MOTS:

\begin{enumerate}

\item Any minimal MOTS $\mathcal{S}$ embedded in an Einstein slice of nonpositive scalar curvature is necessarily unstable following from the fact that $\mathcal{R}\leq0$.

\item Given a positive scalar curvature slice, whenever $\mathcal{S}$ negligibly shears along $n^a$, and $n^a$ does not ``twist'' i.e. has vanishing vorticity, $\mathcal{S}$ carries a strictly positive scalar curvature. This follows from the fact that under the assumption the parenthesized term of \eqref{gauss1} approximates to $\mathcal{Z}_2^2$. The same conclusion of course follows if the slice has a nonnegative scalar curvature and the MOTS has no minimal points.On the other hand, under the same assumptions imposing $\mathcal{R}\gg \mathcal{Z}_2$ would ensure that $\tilde{\Sigma}$ is positively curved.

\end{enumerate}

Additionally, for a MOTS with constant scalar curvature, we have the following condition holding on the MOTS:

\begin{eqnarray}
2\mathcal{Z}_2\mathcal{D}_a\mathcal{Z}_2-\mathcal{D}_a\arrowvert y\arrowvert^2=0.\label{vanc1}
\end{eqnarray}
Thus if the 2-tensor $y$ has a constant norm over $\mathcal{S}$, this imposes certain mild geometric restrictions on $\mathcal{S}$: $\mathcal{S}$ is minimal or has CMC. One simple example where the second term of \eqref{vanc1} vanishes is in the space of nontwisting locally rotationally symmetric (LRS) solutions where the two terms of \eqref{vanc1} coincide. Conversely, a minimal MOTS or one with constant mean curvature will necessarily have a constant $y$ norm.


\section{Summary}\label{sec5}


The stability of MOTS has consequences for the existence and evolution of a black hole. Several recent works have uncovered `exotic' MOTS in the interior of certain static black holes and merged binary black holes \cite{Booth3,Booth4,Booth5,Booth7}. These MOTS have been found to have nonspherical, and in many cases, self-intersecting topologies. One thing that these MOTS have in common is that they are unstable and therefore do not evolve to enclose trapped surfaces. When a spacetime admits an infinitesimal conformal Killing symmetry, several results on stability were obtained by the authors in \cite{mars4}. Recently it has been demonstrated that for a spacetime admitting a conformal Killing symmetry, and with an additional assumption on the gradient of the divergence of the generator of the symmetry, any MOTS in such a spacetime will necessarily be unstable \cite{mars3}. An interest has arisen in the case where an initial data set with an intrinsic Killing symmetry admits a MOTS \cite{Booth1}. And given certain tangency condition of the symmetry vector on the MOTS, the authors of \cite{Booth1} were able to obtain several instability results as well as results related to the character of the eigenvalue spectrum of the stability operator. In this note we have assumed a decomposition of the symmetry vector field along the unit normal to the MOTS in the slice, and a component along the MOTS. The instability of the MOTS is analyzed from algebraic equations on the MOTS. Instability results are then phrased in terms of the zeros of the component normal to the MOTS and the divergence of the component along the MOTS. The MOTS being closed provides some simplified criteria to guarantee instability of the MOTS. Several observations are also made under certain assumptions on the geometry of the MOTS and the slice in which it is embedded. Some comments are provided under the assumption that the MOTS has a constant mean curvature. When the slice embedding the MOTS is an Einstein manifold, using the Gauss' equation to relate the scalar curvatures of the slice and the MOTS we also draw conclusions about certain geometric features of the MOTS.

This work adds to the developing literature on the relationship between symmetries and the stability/instability of MOTS, and may provide some practical tools to directly check instability.


\section{Acknowledgement}


The author is enormously grateful to the anonymous referees for pointing out several imprecise statements and errors in the manuscript, the fixing of which has significantly improved the readibility of the manusript. The author acknowledges that this research is supported by the Institute of Mathematics, funded through the High-level Talent Research Start-up Project Funding of the Henan Academy of Sciences (Project No.: 251819085).

\end{document}